\documentclass[12pt]{amsart}

\usepackage[utf8]{inputenc}%
\usepackage[T1]{fontenc}%
\usepackage{lmodern}
\usepackage[top=1.15in, bottom=1.15in, left=1.2in, right=1.2in]{geometry}
\usepackage{amsmath,amssymb}
\usepackage{amsthm}
\usepackage{mathtools}
\usepackage{mathrsfs} 
\usepackage{xfrac} 
\usepackage{dsfont} 
\usepackage{array}
\usepackage{verbatim}
\usepackage{graphicx}
\usepackage{enumitem} 
\usepackage{lipsum}%
\usepackage{relsize}
\usepackage{exscale}
\usepackage{tikz-cd}
\usepackage[hidelinks]{hyperref}
\theoremstyle{plain} 
\begingroup 
\newtheorem{thm}{Theorem}[section] 
\newtheorem{cor}[thm]{Corollary} 
\newtheorem{lem}[thm]{Lemma} 
\newtheorem{prop}[thm]{Proposition} 
\endgroup

\theoremstyle{definition}
\theoremstyle{remark}
\newtheorem{remark}[thm]{Remark} 
\newtheorem{example}[thm]{Example} 

\newtheoremstyle{ser}
{8pt}
{8pt}
{\it}
{}
{\sf\bfseries}
{:}
{6mm}
{}

\newtheoremstyle{serr}
{8pt}
{8pt}
{\normalfont}
{}
{\sf}
{.}
{6mm}
{}

\theoremstyle{ser}

\theoremstyle{serr}

\theoremstyle{ser}

\numberwithin{equation}{section}

     		\newcommand{\kB}{\mathscr{B}}

     		\newcommand{\kH}{\mathscr{H}}

     		\newcommand{\kV}{\mathscr{V}}
\newcommand{\fX}{\mathfrak X}

\def\XXint#1#2#3{{\setbox0=\hbox{$#1{#2#3}{\int}$ }
		\vcenter{\hbox{$#2#3$ }}\kern-.6\wd0}}

\newlength{\mylen}
\setbox1=\hbox{$\bullet$}\setbox2=\hbox{\tiny$\bullet$}
\newcommand{\R}{\mathbb{R}}
\newcommand{\Z}{\mathbb{Z}}
\newcommand{\C}{\mathbb{C}}

\newcommand{\N}{\mathbb{N}}

\newcommand{\D}{\mathbb{D}}

\newcommand{\ep}{\varepsilon}

\newcommand{\ud}{\,\mathrm{d}} 

\newcommand{\ran}{\textnormal{\textrm{ran}}}

\newcommand{\rP}[1]{\mathbf{R}\!\left( #1 \right)}

\usepackage[utf8]{inputenc}
\usepackage{graphicx} 

\newcommand{\Title}{On the convergence of the normalized power sequence of Riesz operators}%
\newcommand{\ShortTitle}{}

\makeatletter
\newcommand{\raisemath}[1]{\mathpalette{\raisem@th{#1}}}
\newcommand{\raisem@th}[3]{\raisebox{#1}{$#2#3$}}
\makeatother

\newcommand{\AuthorOne}{Soumyashant Nayak}%
\newcommand{\AuthorOneAddr}{%
Statistics and Mathematics Unit, Indian Statistical Institute, 8th Mile, Mysore Road, RVCE Post, Bengaluru and Karnataka - 560 059, India
}%
\newcommand{\AuthorOneEmail}{%
soumyashant@isibang.ac.in
}%

\newcommand{\AuthorTwo}{Renu Shekhawat}%
\newcommand{\AuthorTwoAddr}{%
Department of Mathematics, Indian Institute of Science, C. V. Raman Road, Bengaluru and Karnataka - 560 012, India
}%
\newcommand{\AuthorTwoEmail}{%
renushekhawa@iisc.ac.in
}%

\newcommand{\Keywords}{Normalized power sequence, Riesz operator, Yamamoto-Davis theorem}%

\newcommand{\SubjectClassification}{Primary 47B06; Secondary 47A10, 47A75}

\title[\MakeUppercase\ShortTitle]{\MakeUppercase \Title}
\author{\AuthorOne}%
\address[\AuthorOne]{\AuthorOneAddr}%
\email{\href{mailto:\AuthorOneEmail}{\AuthorOneEmail}}%

\author{\AuthorTwo}%
\address[\AuthorTwo]{\AuthorTwoAddr}%
\email{\href{mailto:\AuthorTwoEmail}{\AuthorTwoEmail}}%

\date{}%
\keywords{\Keywords}%
\subjclass[2020]{\SubjectClassification}%

\begin{document}

\maketitle

\begin{abstract}
Let $\kH$ be a complex Hilbert space and $\kB(\kH)$ be the algebra of all bounded linear operators on $\kH$. For $T \in \kB(\kH)$, let $|T| := (T^*T)^{\frac{1}{2}}$. We refer to the sequence $\big\{|T^n|^{\frac{1}{n}}\big\}_{n \in \N}$ as the NPS (normalized power sequence) of $T$. In this article, we show that the NPS of a Riesz operator $R \in \kB(\kH)$ converges in norm to a positive operator $H$, and provide an explicit description of the spectral resolution of $H$ in terms of the Riesz idempotents associated with the non-zero eigenvalues of $R$. Since every compact operator is a Riesz operator, this gives us a stronger, spatial generalization of the Yamamoto-Davis theorem, which asserts that $\lim_{n \to \infty} s_j(K^n)^{\frac{1}{n}}$ is equal to the $j^{\textrm{th}}$-largest eigenvalue-modulus of the compact operator $K$, where $s_j(\cdot)$ denotes the $j^{\textrm{th}}$-largest singular value. In recent work, the present authors have established the norm convergence of the NPS for spectral operators. Using a rank-one perturbation of a unitary operator, we demonstrate that this fails in general for essentially spectral operators (of which Riesz operators form a subclass).
\end{abstract}

\section{Introduction}

For an operator $T \in \kB(\kH)$, let $|T| := ({T}^* T)^{\frac{1}{2}}$. We refer to the sequence $\big\{|T^n|^{\frac{1}{n}}\big\}_{n \in \N}$ as the normalized power sequence of $T$, henceforth referred to as the NPS of $T$. The classical spectral radius formula, $r(T) = \lim_{n \to \infty} \|T^n\|^{\frac{1}{n}} = \lim_{n \to \infty} \| |T^n|^{\frac{1}{n}} \|$, shows that the operator norm of the elements of the NPS converges to the spectral radius $r(T)$. However, characterizing the asymptotic behavior of the sequence itself requires a more delicate geometric and spectral analysis.

The investigation of such asymptotic behavior traces back to Yamamoto's theorem for matrix singular values (see \cite{yamamoto}). Although Yamamoto also established partial extensions for compact operators on infinite-dimensional separable Hilbert spaces (see \cite{yamamoto_compact}), Davis achieved the definitive generalization in \cite{davis}. Specifically, for a compact operator $K \in \kB(\kH)$, Davis proved that $\lim_{n \to \infty} s_j(K^n)^{\frac{1}{n}} = |\lambda_j|$ for all $j \in \N$, where $s_j(\cdot)$ denotes the $j^{\textrm{th}}$ largest singular value, and $|\lambda_j|$ denotes the $j^{\textrm{th}}$ largest eigenvalue-modulus of $K$ (counted according to algebraic multiplicity) with the convention that if the total multiplicity of the non-zero eigenvalues is a finite number $\nu(K) \in \mathbb{N}$, then $\lambda_j = 0$ for $j > \nu(K)$. To achieve this, he utilized Yamamoto's theorem (\cite{yamamoto}) alongside the machinery of exterior powers and compound operators to find the individual singular values. This was coupled with a spectral splitting technique to separate the part of the spectrum away from zero from the asymptotically vanishing remainder.

While these classical results focus on the scalar sequences of singular values, recent work has shifted toward establishing the convergence of the global operator sequence itself. In \cite{haagerup_schultz},  Haagerup and Schultz studied the SOT-convergence of the NPS of operators in $II_1$ factors. In \cite{stronger_form_yamamoto}, the first-named author established that the NPS of a finite-dimensional matrix converges in norm to a positive semi-definite matrix completely described by the diagonalizable part of the matrix in its Jordan-Chevalley decomposition. Subsequently, a stronger form of Davis' theorem was proved by Bhat and Bala in \cite{bhat-bala}, where they showed that the NPS of a compact operator acting on a separable Hilbert space converges in norm.
In \cite{nayak_shekhawat}, the present authors established the norm convergence of the NPS of a spectral operator (in the sense of Dunford, see \cite[\S 1]{Dunford_survey}) in $\kB(\kH)$, and the norm-limit was explicitly characterized using the idempotent-valued spectral resolution of the spectral operator.

Given that spectral and compact operators possess highly disciplined spectral structures, it is natural to explore whether these convergence properties extend to operators that spectrally mimic them.
Owing to the Riesz-Fredholm theory (see \cite{conway}), the non-zero spectrum of a compact operator consists entirely of isolated eigenvalues of finite algebraic multiplicity. Seeking the broadest possible class of bounded operators that preserve this spectral behaviour away from the origin, Ruston \cite[p.~318]{ruston1954} developed the theory of \textit{Riesz operators}.

\begin{figure}[htpb]
\centering
\begin{tikzpicture}[
    >=stealth,
    box/.style={draw, thick, rounded corners, align=center, inner sep=2ex},
    inclusion/.style={->, thick, shorten >= 2pt, shorten <= 2pt}
]

\node[box] (riesz) at (-2.5, 5) {\textbf{Riesz Operators} \\ \textit{[Present Article]}};
\node[box] (spectral) at (3.5, 5) {\textbf{Spectral Operators} \\ \textit{[Nayak \& Shekhawat 2024]}};

\node[box] (compact) at (-4.5, 2.5) {\textbf{Compact Operators} \\ \textit{[Bhat \& Bala 2024]}};
\node[box] (qn) at (5, 2.5) {\textbf{Quasinilpotent Operators}};

\node[box] (fr) at (-1.5, 0) {\textbf{Finite-Rank Operators} \\ \textit{[Nayak 2023]}};

\draw[inclusion] (fr) -- (compact);
\draw[inclusion] (fr) -- (spectral);
\draw[inclusion] (compact) -- (riesz);
\draw[inclusion] (qn) -- (riesz);
\draw[inclusion] (qn) -- (spectral);

\end{tikzpicture}
\caption{Inclusion hierarchy of operator classes for which the norm-convergence of the normalized power sequence has been established. Arrows indicate subset inclusion.}
\label{fig:operator_hierarchy}
\end{figure}

In this article, we give a substantially simpler proof of norm-convergence as compared to that in \cite{bhat-bala}, in the more general setting of Riesz operators. Crucially, we obtain a concrete description of the spectral resolution of the limiting positive operator in terms of the Riesz idempotents associated with the non-zero eigenvalues, an aspect which is missing in \cite{bhat-bala}. The Yamamoto-Davis theorem follows as an immediate corollary (see Corollary \ref{cor:davis}) of our theorem.

To contextualize these developments, Figure~\ref{fig:operator_hierarchy} illustrates the inclusion hierarchy of the relevant operator classes for which the NPS converges in norm. As depicted, the class of finite-rank operators lies at the intersection of the distinct realms of compact and spectral operators. Moving up the hierarchy, the class of Riesz operators subsumes both compact and quasinilpotent operators. This containment is a direct consequence of their algebraic characterization from the work of Caradus \cite{caradus1966}: an operator $R \in \kB(\kH)$ is a Riesz operator if and only if its image under the quotient map in the Calkin algebra is quasinilpotent, that is, its essential spectrum is $\{0\}$.

We say that an operator in $\kB(\kH)$ is \textit{essentially spectral} if its image under the quotient map in the Calkin algebra is a spectral operator. Note that this class naturally extends both Riesz and spectral operators. In Example~\ref{ex:NPS_fail}, we provide an example of an essentially spectral operator---a rank-one perturbation of a unitary operator---whose NPS fails to converge in norm. Thus an exact characterization of operators in $\kB(\kH)$ possessing a norm-convergent NPS remains an open problem.

\vskip 0.1in

\noindent \textbf{Notation.}
Throughout this article, $\kH$ denotes a complex Hilbert space and $\kB(\kH)$ denotes the set of all bounded linear operators on $\kH$. Let $T$ be an operator in $\kB(\kH)$. We denote the spectrum and the essential spectrum of $T$ by $\sigma(T)$ and $\sigma_e(T)$, respectively, and the spectral radius of $T$ is denoted by $r(T)$. The range of $T$ is denoted by $\ran(T)$ and the orthogonal projection onto the closure of the range of $T$ is denoted by $\rP T$. For $r \ge 0$, we use $\D_{r}$ to denote the closed disc in $\C$ having radius $r$ centered at the origin, and for $r<0$, we stipulate that $\D_r:=\varnothing$.

\section{Main Results}
\label{sec:main}

We begin with a useful elementary lemma.

\begin{lem}
\label{lem:pos_inv_general}
\textsl{
For any bounded linear operator $T \in \kB(\kH)$, the operator $|T|^2 + |I-T|^2$ is positive and invertible.
}
\end{lem}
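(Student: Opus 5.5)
Positivity is immediate: since $|T|^2 = T^*T$ and $|I-T|^2 = (I-T)^*(I-T)$, we have $\langle (|T|^2+|I-T|^2)x, x\rangle = \|Tx\|^2 + \|(I-T)x\|^2 \ge 0$ for every $x \in \kH$. So the substance of Lemma~\ref{lem:pos_inv_general} is invertibility. For a positive operator $A$, it suffices to show $A$ is bounded below, i.e. $\langle Ax,x\rangle \ge c\|x\|^2$ for some $c>0$. Indeed, bounded below implies $A$ is injective with closed range, and self-adjointness gives $\ran(A)^\perp = \ker A = \{0\}$, so $A$ is bijective. Alternatively, $\sigma(A)\subseteq [c,\infty)$ directly.

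The key estimate comes from the triangle inequality applied to $x = Tx + (I-T)x$:
\[
\|x\| \le \|Tx\| + \|(I-T)x\|.
\]
Combined with $(a+b)^2 \le 2(a^2+b^2)$, this gives
\[
\|x\|^2 \le 2\bigl(\|Tx\|^2 + \|(I-T)x\|^2\bigr) = 2\langle (|T|^2+|I-T|^2)x,x\rangle .
\]
Hence $|T|^2+|I-T|^2 \ge \tfrac12 I$, so its spectrum lies in $[\tfrac12, \|T\|^2+\|I-T\|^2]$, and it is invertible with $\bigl\|(|T|^2+|I-T|^2)^{-1}\bigr\| \le 2$.

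There is no real obstacle. The only point requiring care is passing from "bounded below" to "invertible", which uses self-adjointness as noted above, or simply the spectral theorem for the positive operator $A$.
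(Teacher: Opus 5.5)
Your proof is correct and follows the paper's argument exactly: the triangle inequality applied to $x = Tx + (I-T)x$, combined with $(a+b)^2 \le 2(a^2+b^2)$, gives $|T|^2+|I-T|^2 \ge \tfrac12 I$, and invertibility follows. Your extra justification that a self-adjoint operator bounded below is invertible fills in a step the paper leaves implicit.
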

\begin{proof}
Let $P: = |T|^2 + |I-T|^2$. Clearly $P$ is positive. For any $x \in \kH$, we have $x = Tx + (I-T)x$. Applying the triangle inequality and the elementary inequality $(a+b)^2 \le 2(a^2+b^2)$, we obtain:
$$\|x\|^2 \le \big( \|Tx\| + \|(I-T)x\| \big)^2 \le 2 \big( \|Tx\|^2 + \|(I-T)x\|^2 \big) = 2 \langle Px, x \rangle.$$
Thus $P \ge \frac{1}{2} I$, which ensures invertibility. 
\end{proof}

Riesz operators are structurally characterized by a seminal result of West \cite[Theorem 7.5]{west}, which states that every Riesz operator $R \in \kB(\kH)$ admits a decomposition $R = K + Q$, where $K$ is a compact normal operator and $Q$ is a quasinilpotent operator. Moreover, this decomposition respects the spectral structure, as $\sigma(K) = \sigma(R)$ and the non-zero eigenvalues of $K$ and $R$ share the same algebraic multiplicities. This split echoes the Schur decomposition for compact operators on Hilbert space (see \cite{schur-decomp}), reinforcing the fact that Riesz operators structurally generalize compact operators. Unlike the Dunford decomposition (see \cite[Theorem XV.4.5]{dunford_schwartz_III}) of spectral operators ($T = D + Q$), the components $K$ and $Q$ in the West decomposition (also the components in the Schur decomposition) are generally not unique and \textit{do not necessarily commute}, nor are the associated Riesz idempotents guaranteed to be uniformly bounded.

This absence of commutativity and uniform boundedness of the idempotents means that the asymptotic techniques utilized in \cite{nayak_shekhawat} in the context of spectral operators require modification. While inspired by \cite[Lemma 4.3]{nayak_shekhawat}, our $\ep$-excision technique in the key lemma below relies on a different argument. 

By factoring out the Riesz idempotent corresponding to the $\ep$-disc, we asymptotically suppress the influence of the near-origin spectrum, thereby trapping the NPS of $R$ within a small neighborhood of the NPS of the finite-rank cut-off operator $R(I-e_{\ep})$, where $e_\ep$ denotes the Riesz idempotent of $R$ corresponding to the $\ep$-disc, $\D_\ep$.

\begin{lem}\label{lem:appx_ineq}
\textsl{
Let $R \in \kB(\kH)$ be a Riesz operator, and $0 < \ep < 1$. Let $e_{\ep}$ be the Riesz idempotent of $R$ corresponding to $\D_{\ep}$. Then there is a positive integer $n(\ep)$ such that for all $n \ge n(\ep)$, we have
$$(1-\ep) |(R(I-e_{\ep}))^n|^{\frac{1}{n}} \le |R^n|^{\frac{1}{n}} \le (1+\ep) |(R(I-e_{\ep}))^n|^{\frac{1}{n}} + 4 \ep I.$$
}
\end{lem}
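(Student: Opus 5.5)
The plan is to use that $e:=e_{\ep}$ commutes with $R$, split $R^n = R^n(I-e) + R^n e$, and move between squares and $\frac{1}{n}$-th powers using operator monotonicity of $t \mapsto t^{\frac{1}{2n}}$ on $[0,\infty)$ (Löwner–Heinz).

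\emph{Preliminaries.} Since $e$ is a Riesz idempotent of $R$, it commutes with $R$, so $(R(I-e))^n = R^n(I-e) = (I-e)R^n$ and $(Re)^n = R^n e$. Because $R$ is Riesz, its spectrum outside $\D_{\ep}$ is a finite set of isolated eigenvalues, so $e$ is well defined via a contour separating $\sigma(R)\cap\D_{\ep}$ from the rest; if eigenvalues lie on $|\lambda|=\ep$, I take the convention that the contour passes just outside them. The spectrum of $Re$ is $(\sigma(R)\cap \D_{\ep})\cup\{0\}$, so $r(Re)\le \ep$. By the spectral radius formula there is $n_1$ with $\|(Re)^n\| \le \big((1+\ep)\ep\big)^n$ for $n\ge n_1$. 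Also $\|I-e\|^{\frac1n}\to 1$, so there is $n_2$ with $\|I-e\|^{\frac1n}\le (1-\ep)^{-1}$ for $n\ge n_2$. Set $n(\ep)=\max(n_1,n_2)$.

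\emph{Lower bound.} Using commutativity,
$$|(R(I-e))^n|^2 = (R^n)^*(I-e)^*(I-e)R^n \le \|I-e\|^2\,|R^n|^2 .$$
Applying the operator monotone map $t\mapsto t^{\frac{1}{2n}}$ gives $|(R(I-e))^n|^{\frac1n}\le \|I-e\|^{\frac1n}|R^n|^{\frac1n}$. Hence $(1-\ep)|(R(I-e))^n|^{\frac1n}\le |R^n|^{\frac1n}$ for $n\ge n(\ep)$. Writing $(I-e)R^n$ instead of $R^n(I-e)$ is the step where the commutativity of $e$ with $R$ is essential.

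\emph{Upper bound.} From $R^n = (I-e)R^n + eR^n$ and the operator inequality $(X+Y)^*(X+Y)\le 2X^*X+2Y^*Y$ (which follows from $(X-Y)^*(X-Y)\ge 0$), we get
$$|R^n|^2 \le 2|(R(I-e))^n|^2 + 2\,|(Re)^n|^2 \le 2|(R(I-e))^n|^2 + 2\|(Re)^n\|^2 I .$$
Now apply $t\mapsto t^{\frac1{2n}}$. Operator monotonicity handles the inequality. The sum on the right is $A+cI$ with $A\ge0$ and $c\ge 0$ commuting, so functional calculus and the scalar subadditivity $(a+b)^{\frac{1}{2n}}\le a^{\frac1{2n}}+b^{\frac1{2n}}$ give
$$|R^n|^{\frac1n}\le 2^{\frac1{2n}}|(R(I-e))^n|^{\frac1n} + 2^{\frac1{2n}}(1+\ep)\ep\, I .$$
Enlarging $n(\ep)$ so that $2^{\frac{1}{2n}}\le 1+\ep$, the coefficient in the last term is at most $(1+\ep)^2\ep\le 4\ep$. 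This yields the stated upper bound.

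\emph{Main obstacle.} I expect the main difficulty to be the non-commutativity of $|(R(I-e))^n|$ and $|(Re)^n|$. The squared-norm inequality combined with replacing the cross term by the scalar $\|(Re)^n\|^2 I$ is what avoids it, since scalars commute with everything and allow the subadditivity step. A secondary point needing care is justifying $r(Re)\le\ep$ when eigenvalues lie on the circle $|\lambda|=\ep$, which the contour convention above handles. Lemma \ref{lem:pos_inv_general} does not appear to be needed for this argument.
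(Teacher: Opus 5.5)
Your proof is correct and follows essentially the paper's route. The paper compares $|R^n|^2$ with $(R^n)^*P_\ep R^n = |(R(I-e_\ep))^n|^2 + |(Re_\ep)^n|^2$, where $P_\ep = (I-e_\ep)^*(I-e_\ep) + e_\ep^*e_\ep$, via Lemma~\ref{lem:pos_inv_general}; your inequality $(X+Y)^*(X+Y)\le 2X^*X+2Y^*Y$ is exactly what that lemma's proof establishes (it gives $\|P_\ep^{-1}\|\le 2$), your lower bound uses $\|I-e_\ep\|^2$ in place of $\|P_\ep\|$, and the remaining steps (spectral-radius bound on $(Re_\ep)^n$, L\"owner--Heinz, subadditivity of the $2n$-th root) are the same.
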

\begin{proof}
Let $f_\ep := I - e_\ep$ and define $P_{\ep} := {f_\ep}^*{f_\ep} + e_{\ep}^*e_{\ep}$. By Lemma \ref{lem:pos_inv_general}, $P_{\ep}$ is an invertible positive operator, yielding the bounds $\|P_{\ep}^{-1}\|^{-1} I \le P_{\ep} \le \|P_{\ep}\| I$. Multiplying on the left by $(R^n)^*$ and on the right by $R^n$ establishes:
\begin{equation}
\label{eqn:P_bounds}
\begin{aligned}
    & \|P_{\ep}^{-1}\|^{-1} |R^n|^2 \le {R^n}^* P_{\ep} R^n \le \|P_{\ep}\| |R^n|^2 \\
    \Longrightarrow \quad & \|P_{\ep}\|^{-1} ({R^n}^* P_{\ep} R^n) \le |R^n|^2 \le \|P_{\ep}^{-1}\| ({R^n}^* P_{\ep} R^n).
\end{aligned}
\end{equation}

Since $R$ commutes with both $e_{\ep}$ and $f_{\ep}$, we have $${R^n}^* P_{\ep} R^n = |(R f_{\ep})^n|^2 + |(R e_{\ep})^n|^2.$$ Furthermore, $\sigma(Re_\ep) \subseteq \D_\ep$, so the spectral radius formula guarantees an $n_1(\ep) \in \N$ such that $\|(Re_\ep)^n\| \le (2\ep)^n$ for all $n \ge n_1(\ep)$, which implies $|(R e_{\ep})^n|^2 \le (2\ep)^{2n} I$. Substituting this yields:
\begin{equation}\label{eqn:combined_bounds}
    |(R f_{\ep})^n|^2 \le {R^n}^* P_{\ep} R^n \le |(R f_{\ep})^n|^2 + (2\ep)^{2n} I \quad \text{for all } n \ge n_1(\ep).
\end{equation}

Putting together (\ref{eqn:P_bounds}) and (\ref{eqn:combined_bounds}), we apply operator monotonicity of the function $x \mapsto x^{\frac{1}{2n}}$ on $[0, \infty)$, alongside the continuous functional calculus inequality $(H^n + \alpha^n I)^{\frac{1}{n}} \le H + \alpha I$ for a positive operator $H$, to obtain:
$$ \|P_{\ep}\|^{-\frac{1}{2n}} |(R f_{\ep})^n|^{\frac{1}{n}} \le |R^n|^{\frac{1}{n}} \le \|P_{\ep}^{-1}\|^{\frac{1}{2n}} \left( |(R f_{\ep})^n|^{\frac{1}{n}} + 2\ep I \right) \quad \text{for all } n \ge n_1(\ep). $$

Because $\lim_{n \to \infty} \|P_{\ep}\|^{-\frac{1}{2n}} = 1$ and $\lim_{n \to \infty} \|P_{\ep}^{-1}\|^{\frac{1}{2n}} = 1$, there exists an $n(\ep) \ge n_1(\ep)$ such that for all $n \ge n(\ep)$, we have that $\|P_{\ep}\|^{-\frac{1}{2n}}, \|P_{\ep}^{-1}\|^{\frac{1}{2n}}$ are bounded below by $1-\ep$, bounded above by $1+\ep$, respectively. Using the fact that $(1+\ep)(2\ep) = 2\ep + 2\ep^2 < 4\ep$ (since $\ep < 1$), for all $n \ge n(\ep)$ we arrive at the desired inequality:
\[
(1-\ep) |(R f_{\ep})^n|^{\frac{1}{n}} \le |R^n|^{\frac{1}{n}} \le (1+\ep) |(R f_{\ep})^n|^{\frac{1}{n}} + 4\ep I. \qedhere
\]
\end{proof}

Note that the spectrum of a finite-rank operator on an infinite-dimensional Hilbert space is finite and consists entirely of eigenvalues, necessarily including $0$.

\begin{prop}\label{prop:finite-rank}
\textsl{
Let $A \in \kB(\kH)$ be a finite-rank operator whose non-zero eigenvalues have moduli strictly greater than $\ep > 0$. For $\lambda \ge 0$, let $e_{\lambda}$ denote the Riesz idempotent of $A$ corresponding to $\D_{\lambda}$, and define $F_{\lambda} := \rP{e_{\lambda}}$. For $\lambda < 0$, define $F_{\lambda} = 0$. Then $\{F_{\lambda}\}_{\lambda \in \R}$ is a bounded resolution of the identity, and we have 
$$\lim_{n \to \infty} |A^n|^{\frac{1}{n}} = \int_{(\ep, r(A)]} \lambda \ud F_\lambda =: H \quad \text{in norm.}$$
The eigenvalues of $H$ are precisely the moduli of the eigenvalues of $A$. Furthermore, the algebraic multiplicity of any non-zero eigenvalue $\mu > 0$ of $H$ is exactly the sum of the algebraic multiplicities of all eigenvalues of $A$ having modulus $\mu$.
}
\end{prop}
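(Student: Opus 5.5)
The plan is to reduce everything to a finite-dimensional invariant subspace where the matrix result of \cite{stronger_form_yamamoto}, or the spectral-operator result of \cite{nayak_shekhawat}, applies, and then to identify the limit through the ranges of the Riesz idempotents.

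\textbf{Step 1: structure of $A$ and of $\{F_\lambda\}$.} Let $0=\mu_0<\mu_1<\dots<\mu_k=r(A)$ be the distinct moduli of the eigenvalues of $A$, with $\mu_1>\ep$. Then $e_\lambda$ is constant on $[\mu_j,\mu_{j+1})$. Each $e_\lambda$ commutes with $A$, and the $e_\lambda$ form a commuting family satisfying $e_\lambda e_{\lambda'}=e_{\min(\lambda,\lambda')}$. It follows that $\ran(e_\lambda)$ is increasing in $\lambda$. Hence $F_\lambda=\rP{e_\lambda}$ is an increasing, right-continuous family of projections, with $F_\lambda=I$ for $\lambda\ge r(A)$ and $F_\lambda=0$ for $\lambda<0$, and it jumps only at the points $\mu_j$. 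So $\{F_\lambda\}$ is a bounded resolution of the identity. Moreover
\[
H=\sum_{j\ge1}\mu_j\,(F_{\mu_j}-F_{\mu_{j-1}}),
\]
and $\operatorname{rank}(F_{\mu_j}-F_{\mu_{j-1}})=\operatorname{rank}(e_{\mu_j}-e_{\mu_{j-1}})$, which is the total algebraic multiplicity of the eigenvalues of modulus $\mu_j$. This already gives the eigenvalue and multiplicity statements once convergence is proved.

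\textbf{Step 2: norm convergence.} Write $\kH=\ran(e_0)\dotplus\ran(I-e_0)$. Here $\ran(I-e_0)$ is finite-dimensional and is the sum of the generalized eigenspaces for the nonzero eigenvalues, and $Ae_0$ is quasinilpotent. Since $A$ has finite rank, $A e_0$ is nilpotent of some order $m$, so $A^n e_0=0$ for $n\ge m$. Hence for $n\ge m$ we get $A^n=A^n(I-e_0)$, and $|A^n|^2=(I-e_0)^*|B^n|^2(I-e_0)$, where $B=A|_{\ran(I-e_0)}$ is an invertible operator on the finite-dimensional space $\kV=\ran(I-e_0)$. Equivalently, $A$ is a spectral operator: it is finite rank, hence algebraic, and the result of \cite{nayak_shekhawat} (or \cite{stronger_form_yamamoto} after choosing a basis adapted to $\ran(I-e_0)\oplus\ran(I-e_0)^\perp$) gives norm convergence of $|A^n|^{1/n}$. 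I would follow the matrix route. Choose an orthonormal basis of $\kV+\ran(A^*)$, a finite-dimensional subspace that reduces each $|A^n|$, since $|A^n|$ vanishes on $\ker A^n\supseteq \ran(A^*)^\perp$ for $n\ge1$. Applying \cite{stronger_form_yamamoto} to the compression, which is a matrix whose NPS equals the NPS of $A$ restricted there, yields a limit. The limit is $\int\lambda\,dE_\lambda$, where $E_\lambda$ is the projection onto the closure of the span of the generalized eigenvectors for eigenvalues of modulus $\le\lambda$; that is, $E_\lambda=\rP{e_\lambda}=F_\lambda$.

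\textbf{Main obstacle.} The delicate point is matching the description of the limit in \cite{stronger_form_yamamoto} with the Riesz idempotents of $A$ as an operator on $\kH$, rather than of its compression. The subspace $\kV+\ran(A^*)$ need not be $A$-invariant, and $\ker A$ is infinite-dimensional. If this bookkeeping becomes awkward, I would instead argue directly in two steps. First, for $x$ with $e_{\mu_j}x=x$, one has $\|A^nx\|^{1/n}\lesssim(\mu_j+\delta)$; this is the spectral radius formula for $Ae_{\mu_j}$, as in Lemma \ref{lem:appx_ineq}. Second, for $x\perp\ran(e_{\mu_{j}})$, one has a lower bound $\|A^nx\|^{1/n}\gtrsim\mu_{j+1}-\delta$. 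This comes from invertibility of $A$ on $\ran(I-e_{\mu_j})$ together with $\|(I-e_{\mu_j})x\|\ge c\|x\|$. These two bounds squeeze $|A^n|^{1/n}$ against $H$ on each spectral band. Passing from these vector estimates to operator-norm convergence is a finite-dimensional argument on $\ran(A^*)+\kV$, since the kernels of $|A^n|$ stabilize beyond that subspace.
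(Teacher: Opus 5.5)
\textbf{Verdict: same plan as the paper, but with a gap you flag and do not close.}

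Your overall plan is the paper's: cut down to a finite-dimensional piece and invoke \cite[Theorem~3.8]{stronger_form_yamamoto}. Your Step~1 is fine, including $\operatorname{rank}(F_{\mu_j}-F_{\mu_{j-1}})=\operatorname{rank}(e_{\mu_j}-e_{\mu_{j-1}})$. The gap is in Step~2, and it is exactly the ``main obstacle'' you name.

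\textbf{The compression claim.} The subspace $W:=\ran(I-e_0)+\ran(A^*)$ reduces every $|A^n|$, but it is not invariant under $A$. So your assertion that the compression $C:=P_WA|_W$ has the same NPS as $A$ on $W$ is false as stated. Take $Ae_1=e_2$, $Ae_2=e_3$ and $Ae_k=0$ otherwise. Then $W=\operatorname{span}\{e_1,e_2\}$ and $C^2=0$, while $A^2e_1=e_3\neq0$.

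The claim can be rescued for large $n$, but you do not do this. Since $W^\perp\subseteq\ker A$, one has $AP_W=A$, hence $C^n=P_WA^n|_W$. For $n\ge m$ one has $\ran(A^n)\subseteq\ran(I-e_0)\subseteq W$, so the two NPS agree from then on.

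\textbf{The limit description.} More seriously, \cite{stronger_form_yamamoto} describes the limit through the Riesz idempotents of $C$ on $W$. You leave their identification with the $e_\lambda$ of $A$ on $\kH$ open.

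\textbf{The missing idea.} Choose a subspace that reduces $A$ itself. The subspace $\ran(A)+\ran(A^*)$ is finite-dimensional and invariant under both $A$ and $A^*$, and its orthogonal complement lies in $\ker A$. Hence $A=A_0\oplus0$ with $A_0$ a matrix, and everything splits:
\begin{itemize}
\item $|A^n|^{1/n}=|A_0^n|^{1/n}\oplus0$;
\item $e_\lambda=e^{(0)}_\lambda\oplus I$ for $\lambda\ge0$;
\item $F_\lambda=\rP{e^{(0)}_\lambda}\oplus I$;
\item $H=H_0\oplus0$.
\end{itemize}
Nothing needs matching. This is precisely the paper's proof. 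The subspace contains your $W$, because $\ran(I-e_0)=A\,\ran(I-e_0)\subseteq\ran(A)$.

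\textbf{The fallback.} Your two vector estimates are correct. For the lower bound, use $\|A^nx\|\ge\|I-e_{\mu_j}\|^{-1}\|A^n(I-e_{\mu_j})x\|$ together with $\|(I-e_{\mu_j})x\|\ge\|x\|$ for $x\perp\ran(e_{\mu_j})$.

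However, ``squeezing $|A^n|^{1/n}$ against $H$ on each band'' is not an operator argument. The band projections $F_{\mu_j}-F_{\mu_{j-1}}$ do not reduce $|A^n|$. What you actually get are compression bounds:
\begin{itemize}
\item $F_{\mu_j}|A^n|^2F_{\mu_j}\le a_nF_{\mu_j}$;
\item $(I-F_{\mu_j})|A^n|^2(I-F_{\mu_j})\ge b_n(I-F_{\mu_j})$.
\end{itemize}
Turning these into norm convergence requires showing that the spectral projections of $|A^n|$ converge to the $F$'s. One way is as follows. Let $E_n$ be the spectral projection of $|A^n|^2$ for $[t_n,\infty)$, with $a_n\ll t_n\ll b_n$. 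Min--max shows $E_n$ has the same finite rank as $I-F_{\mu_j}$. Then
\[
\|E_n-(I-F_{\mu_j})\|=\|E_nF_{\mu_j}\|\le(a_n/t_n)^{1/2}.
\]
This must be combined with eigenvalue localization in each band. That is the real content of the matrix theorem, and you only assert it.

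\textbf{Your other alternative.} The route you mention in passing is legitimate, as the paper's Remark notes: finite-rank operators are spectral, so \cite[Theorem~4.4]{nayak_shekhawat} applies. You would only need to observe that the spectral measure of $A$ on $\D_\lambda$ is $e_\lambda$. But you chose not to follow it.
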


\begin{proof}
Because $A$ is a finite-rank operator, the subspace $\kV := \ran(A) + \ran(A^*)$ is finite-dimensional. Clearly, $\kV$ reduces $A$, and $A$ acts as the zero operator on $\kV^{\perp}$. Restricting our attention to the finite-dimensional Hilbert space $\kV$, the operator $A|_{\kV}$ can be viewed as a square matrix. 

The existence of the norm limit of the NPS of  $A|_{\kV}$ and its explicit formulation in terms of the ranges of the Riesz idempotents of $A|_{\kV}$ (derived from its Jordan-Chevalley decomposition) follow directly from Theorem~3.8 in \cite{stronger_form_yamamoto}. Because the action of $A$ and its corresponding Riesz idempotents for non-zero eigenvalues are entirely supported on $\kV$, extending this finite-dimensional limit by zero to the orthogonal complement $\kV^{\perp}$ seamlessly yields the assertion on the entire space $\kH$.
\end{proof}

\begin{remark}
Because every finite-rank operator admits a Jordan--Chevalley decomposition, it is necessarily a spectral operator. Consequently, Proposition~\ref{prop:finite-rank} can also be deduced from \cite[Theorem~4.4]{nayak_shekhawat}. This alternative approach relies on a fundamentally different proof strategy than the one utilized in \cite[Theorem~3.8]{stronger_form_yamamoto}.
\end{remark}

We observe that for any $\ep > 0$, the part of the spectrum of a Riesz operator outside the disc $\D_{\ep}$ consists of at most finitely many isolated eigenvalues, each having finite algebraic multiplicity.

\begin{thm}
\label{thm:main}
\textsl{
Let $R \in \kB(\kH)$ be a Riesz operator. For $\lambda > 0$, let $e_{\lambda}$ denote the Riesz idempotent of $R$ corresponding to $\D_{\lambda}$, and define $F_{\lambda} := \rP{e_{\lambda}}$. Let $F_0 := \bigwedge_{\lambda' > 0} F_{\lambda'}$, and $F_{\lambda} = 0$ for $\lambda < 0$. Then $\{ F_{\lambda} \}_{\lambda \in \R}$ is a bounded resolution of the identity, and 
\[
    \lim_{n \to \infty} |R^n|^{\frac{1}{n}} = \int_{0}^{r(R)} \lambda \ud F_\lambda =: H \quad \text{in norm}.
\]
The non-zero eigenvalues of $H$ are precisely the moduli of the non-zero eigenvalues of $R$. Furthermore, the algebraic multiplicity of any non-zero eigenvalue $\mu > 0$ of $H$ is exactly the sum of the algebraic multiplicities of the eigenvalues of $R$ having modulus $\mu$.
}
\end{thm}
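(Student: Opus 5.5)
Combining Lemma~\ref{lem:appx_ineq} (the $\ep$-excision) with Proposition~\ref{prop:finite-rank} applied to the finite-rank cut-off, the plan is to show that the NPS of $R$ is norm-Cauchy up to $O(\ep)$, and then to identify the limit with $H$.

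\textbf{Step 1: the resolution of the identity.} First I verify that $\{F_\lambda\}$ is a bounded resolution of the identity. For $0<\lambda<\lambda'$, the Riesz idempotents satisfy $e_\lambda e_{\lambda'} = e_{\lambda'}e_\lambda = e_\lambda$, so $\ran(e_\lambda)\subseteq \ran(e_{\lambda'})$ and $F_\lambda\le F_{\lambda'}$. For $\lambda\ge r(R)$ we have $e_\lambda=I$, hence $F_\lambda=I$. Right continuity at $\lambda>0$ holds because only finitely many eigenvalues lie in any annulus $\{\lambda<|z|\le\lambda+\delta\}$ with $\lambda>0$, so $e_{\lambda+\delta}=e_\lambda$ for small $\delta$. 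At $0$, right continuity is built into the definition of $F_0$.

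\textbf{Step 2: identify the finite-rank limits.} Fix $\ep\in(0,1)$ such that no eigenvalue of $R$ has modulus $\ep$; this is possible since only countably many moduli occur. Set $A_\ep := R(I-e_\ep)$. This operator is finite rank, since $\ran(I-e_\ep)$ is the finite-dimensional sum of generalized eigenspaces for eigenvalues with $|z|>\ep$. The non-zero eigenvalues of $A_\ep$ are exactly those of $R$ outside $\D_\ep$, with the same algebraic multiplicities. For $\lambda\ge \ep$, the Riesz idempotent of $A_\ep$ for $\D_\lambda$ equals $e_\lambda$: on $\ran(e_\ep)$ the operator $A_\ep$ is $0$, and on $\ran(I-e_\ep)$ it agrees with $R$. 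Hence Proposition~\ref{prop:finite-rank} gives
\[
|A_\ep^n|^{1/n}\to H_\ep:=\int_{(\ep,r(R)]}\lambda\ud F_\lambda,
\]
where the spectral family $F_\lambda$ is the same as in the theorem for $\lambda\ge\ep$. It follows that $\|H-H_\ep\|\le \ep$, because $H-H_\ep=\int_{[0,\ep]}\lambda\ud F_\lambda$.

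\textbf{Step 3: the sandwich.} By Lemma~\ref{lem:appx_ineq}, for $n\ge n(\ep)$,
\[
(1-\ep)|A_\ep^n|^{1/n}\le |R^n|^{1/n}\le (1+\ep)|A_\ep^n|^{1/n}+4\ep I.
\]
Choose $n$ large so that $\|\,|A_\ep^n|^{1/n}-H_\ep\|\le\ep$. Then
\[
H-cI\le |R^n|^{1/n}\le H+cI, \qquad c=C\ep,
\]
with $C$ depending only on $r(R)$; here I use $\|H_\ep\|\le r(R)$ and $\|H-H_\ep\|\le\ep$. For self-adjoint operators, $-cI\le X-H\le cI$ gives $\|X-H\|\le c$. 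Letting $\ep\to0$ along admissible values therefore yields norm convergence to $H$.

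\textbf{Step 4: eigenvalue statements.} For a non-zero eigenvalue $\mu>0$ of $H$, choose an admissible $\ep<\mu$. Then $F_{[\mu,\mu]}$ coincides with the corresponding spectral projection of $H_\ep$. The multiplicity statement then follows from the final assertion of Proposition~\ref{prop:finite-rank} applied to $A_\ep$, whose eigenvalues of modulus $\mu$ are exactly those of $R$. Since $H$ is self-adjoint, its algebraic and geometric multiplicities agree.

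\textbf{Main obstacle.} I expect the delicate point to be Step 2: checking that the Riesz idempotents of $A_\ep$ are exactly the $e_\lambda$ of $R$ for $\lambda\ge\ep$, and in particular that $\rP{\cdot}$ of them agrees, including the handling of the $0$ eigenspace of $A_\ep$. The rest is bookkeeping.
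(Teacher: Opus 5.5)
Your proposal is correct and follows essentially the same route as the paper's proof. Like the paper, you verify the resolution-of-the-identity axioms, apply Proposition~\ref{prop:finite-rank} to the finite-rank cut-off $R(I-e_\ep)$ to get $H_\ep$ with $0\le H-H_\ep\le \ep I$, use Lemma~\ref{lem:appx_ineq} to trap $|R^n|^{1/n}$ within $O(\ep)$ of $H$, and read off the eigenvalue data from $H_\ep$. Your restriction to $\ep$ avoiding eigenvalue moduli is harmless but unnecessary: $\sigma(R)\setminus\D_\ep$ is finite, so $\sigma(R)\cap\D_\ep$ is already clopen in $\sigma(R)$ and $e_\ep$ is defined for every $\ep>0$.
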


\begin{proof}
For the family $\{ F_\lambda \}_{\lambda \in \R}$, we first verify the limits at $\pm \infty$, monotonicity, and right-continuity.

\begin{itemize}
    \item[(i)] \textit{Limits at $-\infty$ and $\infty$}: By definition, $\bigwedge_{\lambda \in \R} F_\lambda = 0$. Since the Riesz idempotent $e_\lambda = I$ for all $\lambda \ge r(R)$, it follows that $\bigvee_{\lambda \in \R} F_\lambda = I$.
    \item[(ii)] \textit{Monotonicity}: Suppose $0 < \lambda \le \lambda'$. The containment of the spectral discs, $\D_\lambda \subseteq \D_{\lambda'}$, implies the containment of the ranges of the corresponding idempotents, $\ran(e_\lambda) \subseteq \ran(e_{\lambda'})$. This subset relation yields the operator inequality $F_\lambda \le F_{\lambda'}$.
    \item[(iii)] \textit{Right-continuity}: Since the non-zero spectrum $\sigma(R) \setminus \{0\}$ has no accumulation points in $\C \setminus \{0\}$, the set of its moduli, $|\sigma(R)| \setminus \{ 0 \}$, is a discrete subset of $(0, \infty)$. Then, for any given $\lambda > 0$, let $\delta = d(\lambda, |\sigma(R)| \setminus \{\lambda \})$ represent the distance to the nearest distinct spectral value. For any intermediate value $\lambda'$ strictly between $\lambda$ and $\lambda + \delta$, we have $\sigma(R) \cap \D_{\lambda'} = \sigma(R) \cap \D_\lambda$. Consequently, the Riesz idempotents remain identical ($e_{\lambda'} = e_{\lambda}$), which implies $F_{\lambda'} = F_{\lambda}$. Thus, this projection valued map $\lambda \mapsto F_\lambda$ is locally constant on the right, and we obtain $F_{\lambda} = \bigwedge_{\lambda' > \lambda} F_{\lambda'}$ for all $\lambda > 0$. For $\lambda \le 0$, right-continuity holds trivially by the explicit definition $F_\lambda = \bigwedge_{\lambda' > \lambda} F_{\lambda'}$. 
\end{itemize}
Hence, $\{ F_\lambda \}_{\lambda \in \R}$ is a bounded resolution of the identity, bounded in support by the spectral radius $r(R)$. 

\vskip0.1in

\noindent Let $0 < \ep < 1$, and define the integrals:
\[
    H_\ep := \int_{(\ep, r(R)]} \lambda \ud F_{\lambda} \quad \text{and} \quad H := \int_{[0, r(R)]} \lambda \ud F_\lambda.
\]
Let $q_\ep := I-e_\ep$. Because $R$ is a Riesz operator, $\sigma(R) \setminus \D_{\ep}$ is a finite set of eigenvalues with finite algebraic multiplicity. Therefore, the Riesz idempotent $I-e_{\ep}$ has a finite-dimensional range, making the cut-off operator $R q_\ep$ a finite-rank operator. Furthermore, $\sigma(R q_{\ep}) \subseteq \overline{(\D_{r(R)} \setminus \D_{\ep})} \cup \{0\}$.

Applying Proposition \ref{prop:finite-rank} to this finite-rank operator yields:
\begin{equation*}
    \lim_{n \to \infty} |(R q_{\ep})^n|^{\frac{1}{n}} = H_{\ep} = H(I-F_{\ep}) \quad \text{in norm}.
\end{equation*}

Thus there exists an $n_0(\ep) \in \N$ such that $\big\| |(R q_{\ep})^n|^{\frac{1}{n}} - H_{\ep} \big\| \le \ep$ for all $n \ge n_0(\ep)$. By Lemma \ref{lem:appx_ineq}, we can choose a sufficiently large integer $n(\ep) \ge n_0(\ep)$ such that:
\begin{equation}\label{eqn:appx_ineq}
    (1-\ep) |(R q_{\ep})^n|^{\frac{1}{n}} \le |R^n|^{\frac{1}{n}} \le (1+\ep) |(R q_{\ep})^n|^{\frac{1}{n}} + 4 \ep I \quad \text{for all } n \ge n(\ep).
\end{equation}

We now estimate the difference between $|R^n|^{\frac{1}{n}}$ and $H$. Clearly $0 \le H-H_{\ep} \le \ep I$. Utilizing the right-hand side of (\ref{eqn:appx_ineq}), we find that for all $n \ge n(\ep)$:
\begin{align*}
|R^n|^{\frac{1}{n}} - H &\le (1+\ep) \big(|(R q_{\ep})^n|^{\frac{1}{n}} - H_{\ep}\big) + (1+\ep)(H_{\ep}-H) + \ep H + 4 \ep I \\
&\le \ep(1+\ep)I + \ep \|H\| I + 4 \ep I \\
&\le (6+\|H\|) \ep I . 
\end{align*}

A similar argument using the left-hand side of (\ref{eqn:appx_ineq}) yields the corresponding lower bound, and for all $n \ge n(\ep)$ we have:
\begin{align*}
|R^n|^{\frac{1}{n}} - H &\ge (1-\ep) \big(|(R q_{\ep})^n|^{\frac{1}{n}} - H_{\ep}\big) + (1-\ep)(H_{\ep}-H) - \ep H \\
&\ge -\ep(1-\ep)I - \ep(1-\ep)I - \ep \|H\| I \\
&\ge -(2+\|H\|) \ep I.
\end{align*}

Combining these bounds, we conclude that $\big\| |R^n|^{\frac{1}{n}} - H \big\| \le (6 + \|H\|)\ep$ for all $n \ge n(\ep)$. This establishes the convergence:
\[
    \lim_{n \to \infty} |R^n|^{\frac{1}{n}} = H \quad \text{in norm}.
\]

For $\ep > 0$, $H_\ep$ is the spectral restriction of $H$ to the interval $(\ep, \infty)$ and is a finite-rank positive operator. Since $\ep > 0$ is arbitrary, the stated spectral and multiplicity properties of $H$ follow immediately from Proposition \ref{prop:finite-rank}.
\end{proof}

\begin{remark}
The existence of non-compact Riesz operators is well-established. For instance, the unilateral weighted shift $W$ on $\ell^2(\N)$ with orthonormal basis $\{e_n\}_{n \in \N}$, defined by $W e_n = w_n e_{n+1}$ with weights
\[
w_n = \begin{cases} 
1 & \text{if } n \text{ is even}, \\
\frac{1}{2^{(n+1)/2}} & \text{if } n \text{ is odd},
\end{cases}
\]
is quasinilpotent (hence a Riesz operator) but \textbf{not} compact. Moreover, the sum of this operator $W$ with the compact normal operator $N$ defined by $Ne_n = \frac{1}{n}e_n$ yields a Riesz operator $N+W$ that is neither compact nor quasinilpotent. Indeed, one can verify that $1 \in \sigma(N+W)$ by explicitly constructing an eigenvector $x = \sum_{n=1}^\infty x_n e_n \in \ell ^2(\N)$ corresponding to the eigenvalue $1$, where the coefficients are given by:
$$x_{2j} = \frac{2j}{2^{j(j+1)/2}} \quad \text{and} \quad x_{2j+1} = \frac{2j+1}{2^{j(j+1)/2}} \quad \text{for } j \ge 1.$$
Consequently, Theorem~\ref{thm:main} applies to a strictly broader class of operators and is stronger than \cite[Theorem~3.10]{bhat-bala}.
\end{remark}

\begin{cor}[see \cite{davis}]
\label{cor:davis}
\textsl{
Let $K \in \kB(\kH)$ be a compact operator, and let $\{\lambda_j\}_{j=1}^\infty$ be its eigenvalues ordered such that $|\lambda_1| \ge |\lambda_2| \ge \cdots$, where non-zero eigenvalues are repeated according to their algebraic multiplicities. If the total algebraic multiplicity of the non-zero eigenvalues is a finite number $\nu(K) \in \N$, let us set $\lambda_j = 0$ for all $j > \nu(K)$. Then for every $j \in \N$, we have
$$\lim_{n \to \infty} s_j(K^n)^{\frac{1}{n}} = |\lambda_j|.$$
}
\end{cor}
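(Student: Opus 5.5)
Since every compact operator is a Riesz operator, Theorem~\ref{thm:main} applies to $K$: the sequence $|K^n|^{\frac{1}{n}}$ converges in norm to the positive operator $H = \int_0^{r(K)} \lambda \ud F_\lambda$. The plan is to pass from norm convergence of the operators to convergence of their singular values. Then we read off the eigenvalues of $H$ from the spectral description in Theorem~\ref{thm:main}.

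\emph{Step 1 (singular values of the NPS).} For each $n$, the singular values of $K^n$ are the eigenvalues of the compact positive operator $|K^n|$, listed in decreasing order with multiplicity. By the functional calculus, $s_j(K^n)^{\frac{1}{n}} = s_j\big(|K^n|^{\frac{1}{n}}\big)$. Here $s_j$ of a positive operator is defined by the min-max formula
$$s_j(A) = \inf\{\|A - F\| : \operatorname{rank} F < j\}.$$
This formula makes sense for any bounded operator and equals the $j$-th largest eigenvalue of a compact positive operator.

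\emph{Step 2 (continuity).} The min-max characterization gives $|s_j(A) - s_j(B)| \le \|A-B\|$ for all bounded $A, B$. Applying this with $A = |K^n|^{\frac{1}{n}}$ and $B = H$ yields $s_j(K^n)^{\frac{1}{n}} \to s_j(H)$ for every $j$.

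\emph{Step 3 (identifying $s_j(H)$).} By Theorem~\ref{thm:main}, the non-zero eigenvalues of $H$ are exactly the moduli $\mu$ of the non-zero eigenvalues of $K$. The multiplicity of each such $\mu$ is the total algebraic multiplicity of the eigenvalues of $K$ of modulus $\mu$. Moreover, for each $\ep>0$, the spectral restriction $H_\ep = H(I-F_\ep)$ has finite rank and $\|H - H_\ep\| \le \ep$. Hence $H$ is a compact positive operator, and its decreasing eigenvalue list is precisely $|\lambda_1| \ge |\lambda_2| \ge \cdots$. This gives $s_j(H) = |\lambda_j|$ whenever $|\lambda_j|>0$.

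\emph{Step 4 (the tail case).} When $\nu(K)<\infty$ and $j > \nu(K)$, the operator $H$ has rank exactly $\nu(K)$. Taking $F = H$ in the min-max formula then gives $s_j(H) = 0 = |\lambda_j|$. The same vanishing holds in the other degenerate case $\lambda_j = 0$ with infinitely many non-zero eigenvalues, which cannot occur because the list is then infinite and positive.

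The main point needing care is Step 3. We must check that the multiplicity statement of Theorem~\ref{thm:main} accounts for the whole spectral mass of $H$ away from $0$, so that $F_0$ carries no positive eigenvalues. This follows because $\lambda \ud F_\lambda$ vanishes on $\{0\}$, and because on $(\ep, r(K)]$ the measure is supported on the finite set of eigenvalue moduli.
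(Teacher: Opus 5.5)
Your proposal is correct and follows the paper's own argument. You use norm convergence of $|K^n|^{\frac{1}{n}}$ to $H$ from Theorem~\ref{thm:main}, then norm-continuity of $s_j$ to get $s_j(K^n)^{\frac{1}{n}} \to s_j(H)$, and then read off $s_j(H)=|\lambda_j|$ from the eigenvalue and multiplicity description of $H$. There are two small differences: you prove the Lipschitz bound $|s_j(A)-s_j(B)|\le\|A-B\|$ directly from the approximation-number formula where the paper cites Fan and Simon, and you make explicit that $H$ is compact via $\|H-H_\ep\|\le\ep$, which the paper uses only implicitly when identifying the singular values of $H$ with its eigenvalues.
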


\begin{proof}
By Theorem \ref{thm:main}, the sequence $|K^n|^{\frac{1}{n}}$ converges in norm to the positive operator $H$. Note that the map $T \mapsto s_j(T)$ is continuous with respect to the operator norm (see \cite[Theorem 2]{Fan}, \cite[Theorem 1.7]{Simon}). Thus,
$$\lim_{n \to \infty} s_j(K^n)^{\frac{1}{n}} = \lim_{n \to \infty} s_j(|K^n|^{\frac{1}{n}}) = s_j(H).$$

Moreover, Theorem \ref{thm:main} explicitly establishes that the non-zero eigenvalues of $H$ are exactly the moduli of the non-zero eigenvalues of $K$, and the algebraic multiplicity of any eigenvalue $\mu > 0$ of $H$ is the sum of the algebraic multiplicities of the eigenvalues of $K$ having modulus $\mu$. 

Because the singular values of a positive operator are precisely its eigenvalues, when arranged in non-increasing order the $j^{\textrm{th}}$ singular value of $H$ matches the $j^{\textrm{th}}$ largest eigenvalue-modulus of $K$, when both sequences are enumerated according to their respective algebraic multiplicities.

If the total algebraic multiplicity of the non-zero eigenvalues of $K$ is a finite integer $\nu(K) \in \N$, then $H$ similarly has exactly $\nu(K)$ non-zero eigenvalues, meaning $s_j(H) = 0$ for all $j > \nu(K)$. Thus, in all cases, we obtain $s_j(H) = |\lambda_j|$ for all $j \in \N$, which completes the proof.
\end{proof}

\begin{example}
\label{ex:NPS_fail}
Since quasinilpotent operators are spectral, Riesz operators (being essentially quasinilpotent) form a natural subclass of essentially spectral operators. Given that the NPS converges in norm for Riesz operators as well as spectral operators, it is natural to ask whether this property extends to all essentially spectral operators. However, even for a rank-one perturbation of a spectral operator, norm convergence of its NPS is \textbf{not} guaranteed. 

Consider the Hilbert space $\kH = \ell^2(\Z)$ with its standard orthonormal basis $\{e_k\}_{k \in \Z}$. Let $T$ be the bilateral forward shift defined by $T e_k = e_{k+1}$, which is a unitary (and hence spectral) operator. Let $P_0$ be the orthogonal projection onto $\text{span}\{e_0\}$, and define the rank-one perturbation $K = -T P_0$. The perturbed operator $A = T + K$ is essentially spectral, acting as $A e_k = e_{k+1}$ for $k \neq 0$, and $A e_0 = 0$.

For any $n \in \N$, applying $A^n$ yields
\[
A^n e_k = 
\begin{cases} 
e_{k+n} & \text{if } k > 0 \text{ or } k \le -n, \\ 
0 & \text{if } -n < k \le 0. 
\end{cases}
\] Thus, $A^n$ is a partial isometry, and $P_n := |A^n|$ is the orthogonal projection onto the subspace $\text{span}\{e_k \mid k > 0 \text{ or } k \le -n\}$. Because $P_n$ is a projection, we have $|A^n|^{\frac{1}{n}} = P_n^{\frac{1}{n}} = P_n$.

For any $m > n$, observe that $P_m e_{-n} = 0$ while $P_n e_{-n} = e_{-n}$ . This implies that $\|(P_n - P_m)e_{-n}\| = 1$. Consequently, the sequence $\big\{|A^n|^{\frac{1}{n}}\big\}_{n \in \N}$ fails to converge in norm.
\end{example}

\section{Acknowledgements}
We would like to express our gratitude to Neeru Bala and B. V. Rajarama Bhat for helpful discussions. The second-named author gratefully acknowledges the financial support received through the ISI Institute fellowship during the initial phase of this research, and expresses sincere gratitude to Prof.\ Tirthankar Bhattacharya for his support through the J.C. Bose Fellowship JCB/2021/000041 of ANRF, at the Indian Institute of Science. 
\vskip 0.1in

\bibliographystyle{amsalpha}
\bibliography{references}

\end{document}